\title{Bounded displacement permutations on tree-like spaces}
\author{Samuel M. Corson}
\address{E. T. S. I. I. Universidad Polit\'{e}cnica de Madrid, Jos\'{e} Guti\'{e}rrez Abascal 2, 28006 Madrid, Spain}
\email{sammyc973@gmail.com}
\theoremstyle{definition}\newtheorem{theorem}{Theorem}
\theoremstyle{definition}
\theoremstyle{definition}
\theoremstyle{definition}
\theoremstyle{definition}
\theoremstyle{definition}
\theoremstyle{definition}
\theoremstyle{definition}
\theoremstyle{definition}\newtheorem{corollary}[theorem]{Corollary}
\theoremstyle{definition}\newtheorem{proposition}[theorem]{Proposition}
\theoremstyle{definition}
\theoremstyle{definition}
\theoremstyle{definition}
\theoremstyle{definition}
\theoremstyle{definition}
\theoremstyle{definition}
\theoremstyle{definition}
\theoremstyle{definition}
\theoremstyle{definition}
\theoremstyle{definition}
\theoremstyle{definition}
\theoremstyle{definition}
\newcommand{\Sym}{\operatorname{Sym}}
\newcommand{\supp}{\operatorname{supp}}
\newcommand{\sep}{\operatorname{sep}}
\newcommand{\Bd}{\mathfrak{B}\mathfrak{d}}
\begin{document}

\keywords{permutation group, virtually free group}

\subjclass[2020]{20B07, 20E05}

\thanks{The author is partially supported by RYC2023-045493-I.}

\begin{abstract}

It is shown that if a metric space exhibits certain finiteness and tree-like properties, then its group of bounded displacement does not include a subgroup isomorphic to the dyadic rational numbers.  This extends a result of N. M. Suchkov, A. A. Shlepkin, and D. A. Taysnyov.

\end{abstract}

\maketitle

\begin{section}{Introduction}

Let $\Sym(X)$ denote the symmetric group on the set $X$ (i.e. the group of all bijections on $X$).  We follow the convention that permutation groups act on the right.  If $(X, d)$ is a metric space we let $$\Bd(X, d) = \{g \in \Sym(X) \mid (\exists r_g \in \mathbb{Z}^+)(\forall x \in X) d(x, xg) \leq r_g\}$$ be the set of bounded displacement permutations (we'll write $\Bd(X)$ if the metric is clear).

Evidently $\Bd(X, d)$ is a subgroup of $\Sym(X)$ and is generally distinct from the group of isometries.  Multiplying by $-1$ gives an isometry on $\mathbb{Z}$ which is not in $\Bd(\mathbb{Z})$, and the transposition $(1 \hspace{.25cm} 2)$ is an element of $\Bd(\mathbb{Z})$ which is not an isometry.  If $(X_1, d_1)$ isometrically embeds into $(X_2, d_2)$ then the embedding gives a monomorphism of $\Bd(X_1, d_1)$ into $\Bd(X_2, d_2)$.  The group $\Bd(\mathbb{Z}^+)$ has been of interest (see e.g. \cite{SozSuchSuch}, \cite{SuchSuch}, \cite{SuchTaras}).  It has been shown \cite{SuchShlTaysn} that $\Bd(\mathbb{Z}^+)$ does not include a subgroup isomorphic to the group $\mathbb{Z}[1/2]$ of dyadic rational numbers, hence none isomorphic to $\mathbb{Q}$, answering \cite[Problem 20.98]{KhMaz}.  We extend this result to a more general setting.

We point out that the group $\Bd(\mathbb{Z}^+)$ is quite large.  Using transpositions of odd numbers with their successors, there is a subgroup of $\Bd(\mathbb{Z}^+)$ of exponent $2$ and cardinality $2^{\aleph_0}$.  The group $\Bd(\mathbb{Z}^+)$ also includes an isomorphic copy of a free (abelian) group of countably infinite rank and a copy of each countable locally finite group \cite{SozSuchSuch}.  From these one can easily construct a subgroup of $\Bd(\mathbb{Z}^+)$ isomorphic to $\mathbb{Z} \oplus \mathbb{Q}/\mathbb{Z}$, which feels quite close to $\mathbb{Q}$.  It is worth noting that if a metric space $(X, d)$ has a subset with finite diameter and infinitely many points, then $\Bd(X, d)$ includes a subgroup isomorphic to $\Sym(\mathbb{Z})$, and therefore includes an isomorphic copy of each countable group.

Let $B(x, r) = \{y \in X \mid d(y, x) \leq r\}$.  The main result of the paper is the following.

\begin{theorem}\label{maintheorem}  Suppose that for metric space $(X, d)$, simplicial tree $T$, and $f:X \rightarrow T$ there exist $\ell, m, q, \in \mathbb{Z}^+$ such that

\begin{enumerate}[(a)]

\item each vertex in $T$ has valence at most $\ell$;

\item $(\forall x \in X)\hspace{.25cm}|B(x, m^2 + m)| \leq q$; and

\item $(\forall x, y \in X) \hspace{.25cm} \frac{1}{m}d(x, y) - m \leq d_T(f(x), f(y)) \leq m \cdot d(x, y) + m$.

\end{enumerate}

\noindent Then the abelian group $\mathbb{Z}[1/2]$ of dyadic rationals is not isomorphic to a subgroup of $\Bd(X, d)$.

\end{theorem}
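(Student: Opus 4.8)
The plan is to argue by contraposition: fix $g\in\Bd(X,d)$ of infinite order, fix $r\in\mathbb Z^+$ with $d(x,xg)\le r$ for all $x$, and prove that the set $\{n\in\mathbb Z^+ : (\exists h\in\Bd(X,d))\ h^n=g\}$ is bounded; this gives at once that infinitely divisible elements are torsion. First I would extract the geometric content of (a)--(c). Condition (c) makes $f$ a quasi-isometric embedding; in particular $f(x)=f(y)$ forces $d(x,y)\le m^2\le m^2+m$, so by (b) every fibre of $f$ has at most $q$ points; and since, by (a), a ball $B_T(v,R)$ of radius $R$ in $T$ has at most some number $\beta(R)$ of vertices depending only on $\ell$ and $R$, we get $|f^{-1}(B_T(v,R))|\le q\,\beta(R)$ for every vertex $v$ and every $R$. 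Also every bounded subset of $X$ is finite, being carried by $f$ to a finite subset of $T$ with finite fibres. Set $R:=mr+m$, so that $d(x,y)\le r$ implies $d_T(f(x),f(y))\le R$; thus $f$ sends every $g$-orbit to a walk in $T$ whose consecutive steps have length $\le R$.

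The heart of the proof is the claim that whenever $h\in\Bd(X,d)$ satisfies $h^n=g$, there is a vertex $v\in T$ such that $B_T(v,R)$ contains the images of $n$ distinct points of $X$; then $n\le q\,\beta(R)$, a bound depending only on $\ell,m,q,r$, and we are done. The inputs are that $h$ commutes with $g$ and that $\langle g\rangle=\langle h^n\rangle$, so $h$ permutes the $g$-orbits and each $\langle h\rangle$-orbit breaks into exactly $\gcd(n,\cdot)$ many $g$-orbits. Let $r_h$ be a displacement bound for $h$.

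Suppose first that $g$ has an infinite orbit $\{x_i\}_{i\in\mathbb Z}$ with $x_{i+1}=x_ig$. Then the $\langle h\rangle$-orbit of $x_0$ is a single bi-infinite orbit $\{z_j\}_{j\in\mathbb Z}$, $z_{j+1}=z_jh$, $z_0=x_0$, $x_i=z_{in}$, and it decomposes into exactly $n$ distinct $g$-orbits, indexed by residues $c$ modulo $n$. Since bounded subsets of $X$ are finite, the infinite sets $\{f(z_{in}):i\ge0\}$ and $\{f(z_{in}):i\le0\}$ are unbounded in $T$. Choose $L$ large enough that $d_T(f(z_0),f(z_{Ln}))\ge 2\bigl((n-1)(mr_h+m)+R\bigr)+2$, and pick a vertex $v$ on the geodesic $[f(z_0),f(z_{Ln})]$ at distance more than $(n-1)(mr_h+m)+R$ from each endpoint. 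Because consecutive $f(z_j)$ are within $mr_h+m$, each of $f(z_0),\dots,f(z_{n-1})$ lies in the component of $T\setminus\{v\}$ containing $f(z_0)$, while each of $f(z_{Ln}),\dots,f(z_{Ln+n-1})$ lies in the component containing $f(z_{Ln})$. For each residue $c$, the $g$-orbit of $z_c$ therefore runs from the first component to the second along $g$-steps of $T$-length $\le R$; exiting the first component forces one such step to cross $v$, so some point $p_c$ of that orbit has $f(p_c)\in B_T(v,R)$. As the $p_c$ lie in $n$ distinct $g$-orbits they are distinct, and the claim follows in this case.

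It remains to treat the case where $g$ has infinite order but all orbits finite; this is the delicate part. The orbit sizes are then unbounded, and (b) together with the quasi-isometric embedding forces the diameter in $X$ of a $g$-orbit of size $k$ to tend to infinity with $k$, so its image spans a subtree of $T$ of large diameter. Picking a large $g$-orbit $O$ inside an $\langle h\rangle$-orbit $O'$ of size $e\cdot|O|$ and running the crossing argument on a long geodesic spanned by $f(O')$, one finds $e$ distinct points (one per $g$-orbit inside $O'$) in a ball $B_T(v,R)$, which bounds the multiplicity: $e\le q\,\beta(R)$. Chasing divisibility relations ($e\mid n$ and $n/e$ coprime to $|O|$) this also bounds $\gcd(|O|,n)$. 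The obstacle — and the point where the precise radius $m^2+m$ in (b) and the lower bound in (c) must do real work — is the residual case in which $e$, and hence $\gcd(|O|,n)$, stays small (possibly $e=1$) even though $n$ is large: then $h$ acts on $O$ essentially as $g$ does, namely as a power $(g|_O)^{\,n^{-1}\bmod|O|}$, and one must rule out that a long $r$-chain whose $f$-image is genuinely tree-like can be realized as such a high-exponent power of itself with bounded displacement. Extracting from this situation $n$ distinct points of $O$ clustering in a bounded ball about a separating vertex of the subtree spanned by $f(O')$ is the main technical hurdle; once it is carried out, the uniform bound $n\le q\,\beta(R)$ contradicts the existence of infinitely many such $n$ unless $g$ is torsion.
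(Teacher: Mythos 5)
Your Case 1 (where $g$ has an infinite orbit) is sound and is essentially the mechanism of the paper's own proof: the separating object is the $f$-preimage of a ball about a point far along the geodesic between two far-apart image points, each of the $n$ sub-orbits of the $\langle h\rangle$-orbit supplies a chain in $X$ whose image must cross that point of the tree, and the preimage set is too small to meet $n$ pairwise disjoint orbits. One small repair: the bound $|f^{-1}(B_T(v,R))|\le q\,\beta(R)$ does not follow merely from ``finite fibres,'' since $f(X)\cap B_T(v,R)$ can be an infinite set of points of $T$; you need to cover $B_T(v,R)$ by $\tfrac12$-balls about its at most $\beta(R+1)$ vertices, choose one preimage point for each such ball that meets $f(X)$, and use the lower bound in (c) to see that every point of $f^{-1}(B_T(v,R))$ lies within $m^2+m$ of one of these chosen points, then apply (b). This is exactly why the radius $m^2+m$ appears in hypothesis (b), and it is how the paper carries out the count.

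The genuine gap is the one you name yourself: the case where $g$ has infinite order but every orbit is finite. There your crossing argument only yields $e=\gcd(n,|O'|)\le q\,\beta(R)$ for an $\langle h\rangle$-orbit $O'$ containing a large $g$-orbit, and when $e$ is small (for instance $n$ coprime to all orbit lengths, so that $h$ restricted to each orbit is just a power of $g$ there) no bound on $n$ comes out. You describe this obstruction accurately, but ``once it is carried out'' is doing all of the remaining work, and the case is not vacuous: there are infinite-order elements of $\Bd(\mathbb{Z})$ all of whose orbits are finite (take disjoint zigzag cycles of unboundedly many lengths), so as written your argument proves the theorem only for elements with an infinite orbit. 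For comparison, the paper routes everything through Proposition \ref{moretechnical}: it fixes a single $n\ge t+1$ and a single root $g_2$, takes a cycle $\sigma$ of $g=g_2^n$ whose support exceeds $|B(x,r_3)|$, lets $\tau$ be the $g_2$-cycle containing it, and writes $\tau^n=\sigma_1\cdots\sigma_n$ with each factor forced to meet a separating set of size at most $t<n$. Note, though, that for a finite cycle $\tau$ of length $N$ this product has only $\gcd(n,N)$ factors, so the finite-orbit situation you flag is precisely where that counting is delicate; identifying it is to your credit, but leaving it unresolved means the proposal is incomplete at its crux.
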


The hypotheses of Theorem \ref{maintheorem} essentially say that $X$ quasi-isometrically embeds into a simplicial tree of uniformly bounded valence, and balls in $X$ of specified radius (depending on the quasi-isometry constant) have uniformly bounded cardinality.  A finitely generated group which is virtually free satisfies these conditions.

\begin{corollary}\label{virtuallyfree}  If $H$ is finitely generated by set $S$ and virtually free then $\mathbb{Z}[1/2]$ is not isomorphic to a subgroup of $\Bd(H, d_{\Gamma})$.  (Here, $d_{\Gamma}$ is the metric on $H$ obtained from the Cayley graph $\Gamma = \Gamma(H, S)$.)
\end{corollary}

From this one can recover the earlier mentioned result of N. M. Suchkov, A. A. Shlepkin, and D. A. Taysnyov \cite{SuchShlTaysn} since the group $\Bd(\mathbb{Z})$ does not include a subgroup isomorphic to $\mathbb{Z}[1/2]$, hence $\Bd(\mathbb{Z}^+)$ does not either.

\end{section}

\begin{section}{Proof}

We give some definitions for a technical general statement.  For $g \in \Sym(X)$ we let $\supp(g) = \{x \in X \mid x \neq xg\}$ denote the \emph{support} of $g$.  Let $(X, d)$ be a metric space.  A sequence $(x_1, \ldots, x_k)$ in $X$ is an \textit{$r$-chain from $x_1$ to $x_k$} if for each $1 \leq i < k$ we have $d(x_i, x_{i + 1}) \leq r$.  Write $\sep(Y, r_1, r_2, x, y)$ if

\begin{itemize}

\item $Y \subseteq X$;

\item $x, y \in X$;

\item $r_1, r_2 \in \mathbb{Z}^+$;

\item if $d(x, z), d(y, w) \leq r_2$ then every $r_1$-chain from $z$ to $w$ contains an element of $Y$. 
\end{itemize}

\noindent We shall say that an element $g \in G$ \emph{is infinitely divisible by $2$ in $G$} if for each $n \in \mathbb{Z}^+$ there exists $h \in G$ such that $h^{2^n} = g$.  If $\sigma$ is a cycle in a permutation group let $|\sigma| = |\supp(\sigma)|$ in case $\sigma$ is finite, else write $|\sigma| = \infty$.

\begin{proposition}\label{moretechnical}  Suppose $(X, d)$ is a metric space such that

\begin{enumerate}
\item $(\forall r \in \mathbb{Z}^+)(\exists s \in \mathbb{Z}^+)(\forall x \in X) \hspace{.25cm}|B(x, r)| \leq s$; and

\item $(\forall r_1 \in \mathbb{Z}^+)(\exists t \in\mathbb{Z}^+)(\forall r_2 \in \mathbb{Z}^+)(\exists r_3 \in \mathbb{Z}^+)(\forall x, y \in X) \newline d(x, y) \geq r_3 \Rightarrow (\exists Y \subseteq X) \sep(Y, r_1, r_2, x, y) \wedge |Y| \leq t$.

\end{enumerate}

\noindent Then $\Bd(X, d)$ does not include a subgroup isomorphic to $\mathbb{Z}[1/2]$.
\end{proposition}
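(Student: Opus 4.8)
The plan is to argue by contradiction: assume $g\in\Bd(X,d)$ is infinitely divisible but not torsion, fix a displacement bound $r_g$ for $g$, and produce a situation in which a separator furnished by hypothesis~(2) is forced to be too large. First a cheap reduction: if $h^n=g$ and $d\mid n$ then $(h^{n/d})^d=g$, so the set $D=\{n\in\mathbb Z^+ : (\exists h\in\Bd(X,d))\,h^n=g\}$ is infinite and closed under divisors; in particular it contains arbitrarily large integers. Since $g$ is not torsion it has either an infinite orbit or finite orbits of unbounded length. I will carry out the argument in full for the first, cleaner case, and indicate the modification for the second.

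Suppose $g$ has an infinite orbit $O=O_g(x_0)$. Apply hypothesis~(2) with $r_1=r_g$ to get $t$. Fix $n\in D$ with $n>t$, fix $h=h_n\in\Bd(X,d)$ with $h^n=g$, and let $r_h$ be a displacement bound for $h$. The structural point is that the $\langle h\rangle$-orbit of $O$ consists of exactly $n$ distinct $g$-orbits $O^{(j)}:=Oh^{j}$, $0\le j<n$: since $h$ commutes with $g$ it permutes the $g$-orbits, and if $d\mid n$ is minimal with $Oh^{d}=O$ then $\psi:=h^{d}|_O$ is a permutation of $O$ with $\psi^{n/d}=g|_O$, which is a single bi-infinite cycle; but then $\psi$ has a single orbit, hence is a single bi-infinite cycle, so $\psi^{n/d}$ is a disjoint union of $n/d$ bi-infinite cycles, forcing $n/d=1$. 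Now set $r_2:=t\cdot r_h$, let hypothesis~(2) return the corresponding $r_3$, and note that $O$, being infinite, cannot lie in a ball (hypothesis~(1) bounds ball sizes), so we may choose $k$ with $d(x_0,x_0g^{k})\ge r_3$; let $Y$ be a set with $\sep(Y,r_g,r_2,x_0,x_0g^{k})$ and $|Y|\le t$. For each $j=0,\dots,t$ the $g$-orbit $O^{(j)}=O_g(x_0h^{j})$ contains the $r_g$-chain $(x_0h^{j},\,(x_0h^{j})g,\,\dots,\,(x_0h^{j})g^{k})$, whose endpoints are $x_0h^{j}\in B(x_0,jr_h)\subseteq B(x_0,r_2)$ and (using that $g,h$ commute) $x_0g^{k}h^{j}\in B(x_0g^{k},jr_h)\subseteq B(x_0g^{k},r_2)$; hence this chain meets $Y$. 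The $t+1$ intersection points lie in the pairwise disjoint orbits $O^{(0)},\dots,O^{(t)}$, so they are distinct and $|Y|\ge t+1$, contradicting $|Y|\le t$.

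If instead all orbits of $g$ are finite, their lengths are unbounded; one runs the same argument with a finite orbit $O=O_g(x_0)$ of some large length $L$ in place of an infinite one. By hypothesis~(1) such an orbit has large diameter (its $L$ points cannot fit in a ball of small radius), and the two arcs of the orbit-cycle joining a diametral pair $x_0,x_0g^{k}$ play the role of the $r_g$-chains above. The only delicate point is distinctness of $O^{(0)},O^{(0)}h,\dots,O^{(0)}h^{t}$: the $\langle h\rangle$-orbit of $O$ has size $n'$ with $n/n'$ coprime to $L$ (by the cycle-type analysis, an $(n/n')$-th root of an $L$-cycle exists only when $n/n'$ is prime to $L$), so one must choose $n\in D$ together with an orbit length $L$ occurring in $g$ such that the part of $n$ supported on the prime divisors of $L$ exceeds $t$ and $L$ is large enough that the orbit's diameter reaches the relevant $r_3$.

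I expect this last coordination — choosing $n\in D$ and a suitable occurring orbit length simultaneously, exploiting that $D$ is infinite and divisor-closed and that the orbit lengths are unbounded — to be the main obstacle; the infinite-orbit case is clean precisely because there the required $n$ distinct parallel orbits are produced automatically, with no number-theoretic matching needed. Hypothesis~(1) is used only to rule out bounded infinite orbits and to make long finite orbits spatially large, while the crucial flexibility in the argument is that in hypothesis~(2) the separator size $t$ depends on $r_1=r_g$ alone, so $r_2$ (and hence $r_3$) may be chosen after, and in terms of, the root $h_n$.
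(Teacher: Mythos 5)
Your infinite-orbit case is complete and correct, and it is essentially the paper's own argument: the paper fixes an $n$-th root $g_2$ of $g_1$ with $n\geq t+1$, passes to the cycle $\tau$ of $g_2$ whose support contains a large cycle $\sigma$ of $g_1$, writes $\tau^n=\sigma_1\cdots\sigma_n$ as $n$ pairwise disjoint cycles of $g_1$, and observes that each $\supp(\sigma_i)$ contains an $r_1$-chain running between the $r_2$-neighbourhoods of two points at distance at least $r_3$, so each must meet the separator $Y$ --- the same count you perform with the translates $Oh^{j}$. Your verification that the $n$ translated orbits are genuinely distinct is, if anything, more careful than the paper's, and you handle the quantifier order in hypothesis (2) correctly.

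The genuine gap is the finite-orbit case, which you leave as a plan, and the obstruction you name is real: if $\sigma$ is an $M$-cycle of $g_1$ contained in an $L$-cycle $\tau$ of $g_2$, then $\tau^n$ splits into only $\gcd(n,L)$ disjoint cycles, and the only constraint is that $n/\gcd(n,L)$ be coprime to $M$; so the number of parallel $g_1$-cycles available for the count can be as small as the largest divisor of $n$ supported on the primes dividing $M$, which is $1$ whenever $\gcd(n,M)=1$. Divisor-closure of $D$ together with unboundedness of the cycle lengths does not by itself rule out a configuration in which, say, every long cycle length is a power of $2$ while every $n\in D$ is odd; there no choice of $n$ and $M$ makes the count exceed $t$, and a single long cycle only forces $Y$ to meet the one or two arcs joining a far-apart pair. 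In that situation the root satisfies $g_2|_{\supp(\sigma)}=(g_1|_{\supp(\sigma)})^{m}$ with $nm\equiv 1\pmod M$, and one would have to show that such a power cannot have uniformly bounded displacement over arbitrarily long cycles (or otherwise exclude the configuration); nothing in your proposal does this, so the proof is incomplete. You should be aware that the paper's own proof runs a single uniform argument and simply asserts the decomposition $\tau^n=\sigma_1\cdots\sigma_n$ into $n$ pairwise disjoint cycles, which is automatic only when $\tau$ is an infinite cycle or $n$ divides $L$; so the step you flagged as the main obstacle is precisely the delicate point, and it is not resolved by matching the paper's text either.
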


\begin{proof}
Assume the hypotheses and let $g_1$ be infinitely divisible by $2$ in $\Bd(X, d)$.  Let $d(x, xg_1) \leq r_1$ for all $x \in X$.  Select $t \in \mathbb{Z}^+$ as in assumption (2), without loss of generality $t \geq 2$.  As $g_1$ is infinitely divisible by $2$, select $n \in \mathbb{Z}^+$ and $g_2 \in \Bd(X, d)$ such that $n > 2t$ and $g_2^{2^n} = g_1$.  Let $d(x, xg_2) \leq r_2'$ for all $x \in X$ and let $r_2 = 2^n \cdot r_2'$.  Select $r_3 \in \mathbb{Z}^+$ as in assumption (2), without loss of generality $r_3 > r_2$.  By hypothesis (1) select $s \in \mathbb{Z}^+$ such that $|B(x, r_3)| \leq s$ for all $x \in X$.

Suppose for contradiction that cycles in the cyclic decomposition of $g_1$ have supports which are of arbitrarily large (possibly infinite) size.  Let $\sigma$ be a (possibly infinite) cycle in the decomposition of $g_1$ for which $|\sigma| \geq 2^n \cdot (s + 3)$.  Let $\tau$ be a cycle in the cyclic decomposition of $g_2$ such that $\supp(\tau) \supseteq \supp(\sigma)$.  Thus we have a cyclic decomposition $\tau^{2^n} = \sigma_1 \cdots \sigma_{2^{n'}}$, where $\sigma_1 = \sigma$ and in case $\sigma$ is infinite we have $2^{n'} = 2^n$ and in case $\sigma$ is finite we have $2^{n'} = \operatorname{gcd}(|\tau|, 2^n)$ and $|\sigma_i| = |\tau|/\operatorname{gcd}(|\tau|, 2^n)$ for each $1 \leq i \leq 2^{n'}$.  Selecting $x \in \supp(\sigma)$, we know since $|B(x, r_3)| \leq s$ that there is some $y \in \supp(\sigma) \setminus B(x, r_3)$.  Now $x, y \in \supp(\sigma) \subseteq \supp(\tau)$ and $d(x, y) \geq r_3$, so by assumption (2) there exists $Y \subseteq X$ such that $\sep(Y, r_1, r_2, x, y)$ and $|Y| \leq t$.  

Clearly for $1 \leq i \leq 2^{n'}$ and $u \in \supp(\tau)$ there exists $v \in \supp(\sigma_i)$ with $d(u, v) \leq n \cdot r_2'$.  Then for $1 \leq i \leq 2^{n'}$ there are $z_i, w_i \in \supp(\sigma_i)$ with $d(x, z_i), d(y, w_i) \leq n \cdot r_2' \leq 2^n \cdot r_2' = r_2$.  By applying $\sigma_i$ or $\sigma_i^{-1}$ we obtain an $r_1$-chain from $z_i$ to $w_i$, and so $Y \cap \supp(\sigma_i) \neq \emptyset$.  Now in case $2^{n'} \geq (2^n)^{\frac{1}{2}}$ we obtain a contradiction since then $2^{n'} \geq (2^n)^{\frac{1}{2}} > t \geq |Y|$ (that $2^n > t^2$ follows from the fact that $t \geq 2$ and $n > 2t$).  Thus we know that $\sigma$ is finite and that $2^{n'} = \operatorname{gcd}(|\tau|, 2^n) < (2^n)^{\frac{1}{2}}$, and in particular $|\sigma|$ is odd.  For brevity let $N = |\sigma|$.

In the cyclic group $\langle \sigma \rangle$ generated by the cycle $\sigma$, we know (since $N$ is odd) that for each $h_1 \in \langle \sigma \rangle$ there is a unique $h_2 \in \langle \sigma \rangle$ with $h_2^2 = h_1$.  In fact if $h_1 = \sigma^m$ with $0 \leq m < N$ we know $h_2 = \sigma^{m/2}$ if $m$ is even and $h_2 = \sigma^{N- \frac{N - m}{2}}$ in case $m$ is odd.  Define a sequence $m_1, m_2, m_3, \ldots$ of elements in $\{0, 1, \ldots, N - 1\}$ by letting $m_1 = N - \frac{N - 1}{2}$ and in case $m_i$ is even let $m_{i + 1} = \frac{m_i}{2}$ and in case $m_i$ is odd let $m_{i + 1} = N - \frac{N - m_i}{2}$.  Thus, $\sigma^{2m_1} = \sigma$ and $\sigma^{2m_{i + 1}} = \sigma^{m_i}$.  Since $2^{n'} =  \operatorname{gcd}(|\tau|, 2^n) < (2^n)^{\frac{1}{2}}$ we get $\tau^{2^{n - 1}}\upharpoonright \supp(\sigma) = \sigma^{m_1}$ and generally $\tau^{2^{n - i}}\upharpoonright \supp(\sigma) = \sigma^{m_i}$ for all $1 \leq i \leq t$.

Define a sequence $\lambda_1, \lambda_2, \ldots$ of real numbers by $\lambda_1 = \frac{1}{2}$ and $\lambda_{i + 1} = \frac{\lambda_i}{2}$ in case $m_i$ is even and $\lambda_{i + 1} = 1 - \frac{1 - \lambda_i}{2}$ in case $m_i$ is odd.  It is easy to see that $|m_i - N\lambda_i| \leq \frac{1}{2}$ for all $i \in \mathbb{Z}^+$ (by induction, the claim is true for $i = 1$ and in the successor case the check is easy).  Moreover we also have $0 < \lambda_i < 1$ and $\lambda_i$ is of form $\frac{w}{2^i}$ where $w$ is odd.  Then as $|m_i - N\lambda_i| \leq \frac{1}{2}$ and $s + 3 \leq \frac{N}{2^n}$ we see that the following sets are pairwise disjoint and all elements of the sets are less than $N$.

$$
\begin{array}{ll}
 \{0, 1, \ldots, s\} \\
\{m_1, m_1 + 1, \ldots, m_1 + s\} \\
\hspace{2cm}\vdots \\
\{m_t, m_t + 1, \ldots, m_t + s\}
\end{array}
$$

Returning to a point $x \in \supp(\sigma)$, note that the sequence $x, x\sigma, x\sigma^2, \ldots, x\sigma^s$ consists of pairwise distinct points and so there exists $0 < j \leq s$ such that $d(x, x\sigma^j) \geq r_3$.  No two of the following sequences of points has an element in common

$$
\begin{array}{ll}
 x, x\sigma, x\sigma^2,  \ldots, x\sigma^j \\
x\sigma^{m_1}, x\sigma^{m_1 + 1}, x\sigma^{m_1 + 2}, \ldots, x\sigma^{m_1 + j} \\
\hspace{2cm}\vdots \\
x\sigma^{m_t}, x\sigma^{m_t + 1}, x\sigma^{m_t + 2}, \ldots, x\sigma^{m_t + j}
\end{array}
$$

\noindent Also, we know by the triangle inequality that 

$$
\begin{array}{ll}
 d(x, x\sigma^{m_1}) & \leq d(x, x\tau) + d(x\tau, x\tau^2) + \cdots + d(x\tau^{2^{n-1} - 1}, x\tau^{2^{n-1}}) \\
& \leq 2^{n-1} \cdot r_2'  \\
&   \leq 2^n \cdot r_2' = r_2
\end{array}
$$

\noindent and similarly $d(x, x\sigma^{m_i}) \leq 2^{n - i}\cdot r_2' \leq r_2$ for each $1 \leq i \leq t$.  By the same token, $d(x\sigma^j, x\sigma^{m_i + j}) \leq r_2$ for each $1 \leq i \leq t$.  So selecting a set $Y'$ with $\sep(Y', r_1, r_2, x, x\sigma^j)$ and $|Y'| \leq t$ we once again obtain a contradiction.  So, the cycles in a decomposition of $g_1$ have a uniform bound on their size, so $g_1$ is torsion.
\end{proof}

\begin{proof}[Proof of Theorem \ref{maintheorem}]
Assume the hypotheses.  Without loss of generality $\ell \geq 2$.  Let $T^0$ denote the set of vertices of the simplicial tree $T$.  For a vertex $v \in T^0$ and $R \in \mathbb{Z}^+$ it is clear that $$|B(v, R) \cap T^0| \leq \ell^0 + \ell^1 + \cdots + \ell^R \leq \ell^{R + 1}.$$  If $p \in T$ is an arbitrary point and $R \in \mathbb{Z}^+$ we take $v \in T^0$ with $d_T(v, p) \leq \frac{1}{2}$ and let $V = \{v' \in B(v, R + 1) \cap T^0 \mid f(X) \cap B(v', \frac{1}{2}) \neq \emptyset\}$.   For each $v' \in V$ select $y_{v'} \in X$ such that $d_T(f(y_{v'}), v') \leq \frac{1}{2}$.  It is clear that if $f(x) \in B(p, R) \subseteq  B(v, R+1)$ there is some $v' \in V$ with $d_T(f(x), v') \leq \frac{1}{2}$.  Then $$f^{-1}(B(p, R)) \subseteq \bigcup_{v' \in V}f^{-1}(B(f(y_{v'}), 1)) \subseteq \bigcup_{v' \in V} B(y_{v'}, m^2 + m)$$ and so $|f^{-1}(B(p, R))| \leq \ell^{R + 2} \cdot q$.

We verify the conditions of Proposition \ref{moretechnical}.  Towards condition (1) let $x \in X$ and $r \in \mathbb{Z}^+$.  Certainly $f(B(x, r)) \subseteq B(f(x), m \cdot r + m)$, and as $$B(x, r) \subseteq f^{-1}(f(B(x, r))) \subseteq f^{-1}(B(f(x), m \cdot r + m))$$ we get $|B(x, r)| \leq \ell^{m \cdot r + m + 2} \cdot q$.

To verify condition (2) let $r_1 \in \mathbb{Z}^+$ be given.  Let $t = \ell^{m \cdot r_1 + m + 2} \cdot q$.  Given $r_2 \in \mathbb{Z}^+$ we let $r_3 = m^2(2r_1 + 2r_2 + 3) $.  Suppose $d(x, y) \geq r_3$.  Let $\gamma$ be the geodesic path in $T$ between $f(x)$ and $f(y)$.  As $d_T(f(x), f(y)) \geq \frac{1}{m}d(x, y) - m \geq 2m(r_1 + r_2 + 1)$ we can select a point $p$ in the image of $\gamma$ such that both $d_T(p , f(x))$ and $d_T(p, f(y))$ are at least $m( r_1 +  r_2 + 1)$.  Let $Y = f^{-1}(B(p, m\cdot r_1 + m))$ and note that $|Y| \leq \ell^{m\cdot r_1 + m + 2} \cdot q$.

Let $z \in B(x, r_2)$, $w \in B(y, r_2)$, and $(z = x_1, x_2, \ldots, x_k = w)$ be an $r_1$-chain from $z$ to $w$.  Note that both $d_T(f(z), f(x))$ and $d_T(f(w), f(y))$ are at most $m \cdot r_2 + m$ and for each $1 \leq i < k$ we have $d_T(f(x_i), f(x_{i + 1})) \leq m \cdot r_1 + m$.  Let $\gamma_i$ be the geodesic path from $f(x_i)$ to $f(x_{i + 1})$, $\gamma_{xz}$ be that from $f(x)$ to $f(z)$, and $\gamma_{w, y}$ be that from $f(w)$ to $f(y)$.  The concatenation $\gamma' = \gamma_{x, z}\gamma_1\gamma_2 \cdots\gamma_{k-1}\gamma_{w, y}$ gives a path in $T$ from $f(x)$ to $f(y)$, so the image of $\gamma$ is a subset of the image of $\gamma'$.  Thus $p$ is in the image of $\gamma'$.  If $p$ is in the image of $\gamma_{x, z}$ then $d_T(p, f(x)) \leq m \cdot r_2 + m$, which is a contradiction, and similarly $p$ is not in the image of $\gamma_{w, y}$.  Thus $p \in \gamma_i$ for some $i$, and so $f(x_i) \in B(p, m \cdot r_1 + m)$, and $x_i \in Y$.  We conclude that $\sep(Y, r_1, r_2, x, y)$ and condition (2) holds.
\end{proof}

\end{section}

\section*{Acknowledgement}

The author thanks Anton A. Klyachko for pointing out \cite{Klya} that \cite[Problem 20.98]{KhMaz} was already answered in \cite{SuchShlTaysn}.  The author also thanks the reviewers for carefully reading the paper.  Heartfelt thanks are due especially to reviewer B for pointing out a sizable gap in the original argument of Proposition \ref{moretechnical}, which lead to a revision of the contents of this article and more modest claims in the main theorem.

\end{document}